\begin{document}

\title{Optimal majority threshold in a stochastic environment
}
\author{Vitaly Malyshev
}

\maketitle

\begin{abstract}
Within the model of social dynamics determined by collective decisions in a stochastic
environment (the ViSE model), we consider the case of a homogeneous society consisting of
classically rational economic agents. We obtain analytical expressions
for the optimal majority threshold as a function of the parameters of the environment,
assuming that the proposals are generated by means of random variables.
The cases of several specific distributions of these variables are considered in more detail.
\keywords{ViSE model \and social dynamics \and voting \and stochastic environment \and pit of losses}
\end{abstract}

\section{Introduction}
\label{intro}
    In \citeauthor{borzenko2006} (\citeyear{borzenko2006}), the ViSE (Voting in a Stochastic Environment) model (hereinafter, the $model$) has been proposed.
    Its simplest version describes a society that consists
    of $n$ classically rational economic agents 
    who are boundedly rational egoists
    (hereafter, $egoists$). Each of them maximizes their individual utility in every act of choice,
    which turns out to be the most profitable noncooperative strategy. Various cooperative and egoistic strategies within the
    model have been studied in \citeauthor{borzenko2006} (\citeyear{borzenko2006}), \citeauthor{cheb2006} (\citeyear{cheb2006}),
    \citeauthor{cheb2009} (\citeyear{cheb2009}), and \citeauthor{mal2017} (\citeyear{mal2017}), and altruistic strategies in \citeauthor{cheb2018} (\citeyear{cheb2018}).

        Each $participant$/$agent$ is characterized by the
    current value of individual $utility.$ 
    A $proposal$ of the environment is a vector of proposed utility increments of the participants.
    A similar model with randomly generated proposals appeared in \citeauthor{compte2017} (\citeyear{compte2017}).
    The society can accept or reject every proposal by means of voting, i.e., choose $reform$ or $status$ $quo$. Each agent votes for those and only
    those proposals that increase his/her individual utility.
    A proposal is accepted and implemented, i.e., the participants' utilities are incremented in accordance with the proposal,
    if and only if the proportion of the society supporting this
    proposal is greater than a $strict$ $relative$ $voting$ $threshold$ $\alpha\in[-\frac{1}{n},1]$.
    Otherwise, all utilities remain unchanged. This voting procedure is called ``$\alpha$-majority''
    (cf. \citeauthor{nitzan1982} (\citeyear{nitzan1982}, \citeyear{nitzan1984}),
    \citeauthor{fels2001} (\citeyear{fels2001}), \citeauthor{baharad2019} (\citeyear{baharad2019}),
     and \citeauthor{oboyle2009} (\citeyear{oboyle2009})).

        The voting threshold $\alpha$ will also be called the $majority$ $threshold$ or, more precisely, the $acceptance$ $threshold$, since
    $\alpha < 0.5$ is allowed.

      The concept of proposal allows one to model potential changes that are beneficial for some agents and disadvantageous for others.
    As a result of the implementation of such a proposal, the utilities of some agents increase, while the utilities of
    others decrease.

        The proposals are stochastically generated by the $environment$ and put to a general vote over and over again. The subject of
    the study is the dynamics of the participants' utilities as a result of this process. A similar dynamic model proposed by A. Malishevski has been presented
    in \citeauthor{mirkin1979} (\citeyear{mirkin1979}), Subsection 1.3 of Chapter 2.         Another model whose simplest version is very close to the simplest version of the ViSE model was studied in \citeauthor{barb2006} (\citeyear{barb2006}).

        Some other related voting models have been studied in the theory of legislative bargaining (see \citeauthor{duggan2012} (\citeyear{duggan2012})),
    where stochastic generation of proposals has been assumed in some cases (\citeauthor{penn2009} (\citeyear{penn2009}), \citeauthor{dziuda2014} (\citeyear{dziuda2014}, \citeyear{dziuda2016})).
     On other connections between the ViSE model and various comparable models, we refer to \citeauthor{cheb2018} (\citeyear{cheb2018}).

        In accordance with the ViSE model, the utility increments/decrements that form proposals
    are realizations of independent identically distributed random variables (independence is taken as a base case,
    models with dependent or non-identically distributed random variables can also be considered). In this paper, we present a general
    result applicable to any distribution that has a mathematical expectation and focus on four families of
    distributions: continuous uniform distributions, normal distributions (cf. \citeauthor{opt2018} (\citeyear{opt2018})), symmetrized Pareto distributions (see \citeauthor{cheb2018} (\citeyear{cheb2018})), and Laplace distributions.

        Each distribution is characterized by its mathematical expectation, $\mu$ and standard deviation, $\sigma$.
    The ratio $\sigma/\mu$ is called the coefficient of variation of a random variable.
    The inverse coefficient of variation $\rho = \mu/\sigma$, which we call the $adjusted$ (or $normalized$) $mean$ $of$ $the$
    $environment$, measures the  relative favorability of the environment. If $\rho > 0$, then the opportunities provided by the environment
    are favorable on average; if $\rho < 0$, then the environment is unfavorable. We introduce the concept of optimal acceptance threshold and investigate dependence of this threshold on $\rho$ for several types of distributions.

    In the present paper, we study: 
    \begin{itemize}
        \item the optimal acceptance threshold for a general distribution (Subsection \ref{sec:general}), i.e., the threshold that maximizes the social welfare (this generalizes Theorem 1 in \citeauthor{opt2018} (\citeyear{opt2018}));
        \item dependence of the optimal acceptance threshold on the model parameters for several specific distributions
        (Subsections \ref{sec:2} to \ref{sec:4}).
        \item expected utility increase for a general distribution (Section \ref{sec:expUtInc}) (this generalizes Lemma 1 in \citeauthor{cheb2006} (\citeyear{cheb2006})).
    \end{itemize}

\section{Optimal majority threshold}
\label{sec:1}

\subsection{The model}
\label{sec:model}
    To familiarize with the problem that the optimal majority threshold solves, let us look at
    the dependence of the expected utility increment of an agent on the adjusted mean of the environment $\rho$ (\citeauthor{opt2018} (\citeyear{opt2018})).

    Let $\bm\zeta=(\zeta_1,\ldots,\zeta_n)$ denote a random proposal on some step. Its component $\zeta_i$ is the proposed utility increment of agent~$i$. The components $\zeta_1,\ldots,\zeta_n $ are independent identically distributed random variables. $\zeta$~will denote a similar scalar variable without reference to a specific agent.
    Similarly, let $\bm\eta=(\eta_1,\ldots,\eta_n)$ be the random vector of \emph{actual\/} increments of the agents on the same step. If $\bm\zeta$ is adopted, then $\bm\eta=\bm\zeta$; otherwise $\bm\eta=(0,\ldots,0)$. Consequently,
        \begin{equation}\label{e:etazeta}
            \bm\eta=\bm\zeta I(\bm\zeta, \alpha n),
        \end{equation}
    where\footnote{$\#X$ denotes the number of elements in the finite set $X$.}
        \begin{eqnarray}\label{indicator}
            I(\bm\zeta, \alpha n) =
                        \begin{cases}
                            1, &\text{$\#\{k:\zeta_k > 0, k = 1,...,n\} > \alpha n$ } \\ 
                            0, &\text{otherwise.}
                        \end{cases}
        \end{eqnarray}
    and $[\alpha n] \leq n$, $[\alpha n] = n-1$ corresponds to unanimity, $[\alpha n] = -1$ and $[\alpha n] = n$ to accept and reject of proposal without voting, respectively\footnote{$[\alpha n]$ is the integer part of $\alpha n$.}.

    Eq. \eqref{e:etazeta} follows from the assumption that each agent votes for those and only those proposals that increase his/her individual utility.

    Let $\eta$ be a random variable similar to every $\eta_i,$ but having no reference to a specific agent. We are interested in the expected utility increment of an agent, i.e. ${\rm E}(\eta),$ where ${\rm E}(\cdot)$ is the mathematical expectation.

    Consider an example. For 21 participants and $\alpha=0.5$, the dependence of ${\rm E}(\eta)$ on $\rho=\mu/\sigma$ is presented in Fig. \ref{fig:pit}, where proposals are generated by the normal distribution.

    \begin{figure}
          \includegraphics{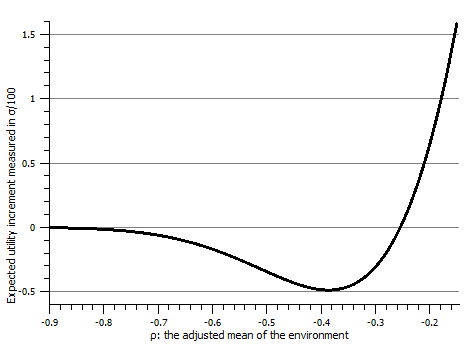}
        \caption{Expected utility increment of an agent: 21 agents; $\alpha = 0.5$; normal distribution.}
        \label{fig:pit}       
    \end{figure}

    Fig. \ref{fig:pit} shows that for $\rho \in (-0.85,-0.266),$ the expected utility increment is an appreciable
    negative value, i.e., proposals approved by the majority are, on average, unprofitable and impoverishing for the society.
    This part of the curve is called a ``$pit$ $of$ $losses.$'' For $\rho < -0.85,$
    the negative mean increment is very close to zero, since the proposals are extremely rarely accepted.

\subsection{A voting sample}
\label{sec:voting_sample}

    By a ``$voting$ $sample$'' of size $n$ with absolute voting threshold $n_0$ we mean the vector of random variables $(\zeta_1 I(\bm\zeta, n_0),\ldots,\zeta_n I(\bm\zeta, n_0))$, where $\bm\zeta=(\zeta_1,\ldots,\zeta_n)$ is a sample from some distribution and $I(\bm\zeta, n_0)$ is defined by \eqref{indicator}.
    According to this definition, a voting sample vanishes whenever the number of positive elements of sample $\bm\zeta$ does not exceed the threshold $n_0$.

    The lemma on ``normal voting samples'' obtained in \citeauthor{cheb2006} (\citeyear{cheb2006}) can be generalized
    as follows.
    \begin{theorem}
        \label{lemm:1}
        Let $\bm\eta=(\eta_1,\ldots,\eta_n)$ be a voting sample from some distribution with an absolute voting threshold $n_0 \in \{-1,0,...,n\}$. Then, for any $k=1,...,n,$
            \begin{eqnarray}\label{voting_sample}
                {\rm E}(\eta_k) = \sum\limits_{x=n_0+1}^{n}
                    \left((E^+ + E^-)\frac{x}{n} - E^-\right) \begin{pmatrix} n \\ x \end{pmatrix} p^x q^{n-x},
            \end{eqnarray}
       where $E^- = \big|{\rm E}(\zeta \; | \; \zeta \leq 0)\big|, E^+ = {\rm E}(\zeta \; | \; \zeta > 0),$
       $p = P\{\zeta > 0\} = 1 - F(0), q = P\{\zeta \leq 0\} = F(0),$
        $\zeta$ is the random variable that determines the utility increment of any agent in a random proposal, and $F(\cdot)$ is the cumulative distribution function of $\zeta.$
    \end{theorem}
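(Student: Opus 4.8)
The plan is to compute $E(\eta_k) = E\bigl(\zeta_k\, I(\bm\zeta, n_0)\bigr)$ directly by conditioning on the number of positive components of the proposal. Write $X = \#\{j : \zeta_j > 0\}$; since the $\zeta_j$ are i.i.d.\ with $P\{\zeta_j > 0\} = p$, we have $X \sim \mathrm{Binomial}(n,p)$, and by \eqref{indicator} the indicator $I(\bm\zeta, n_0)$ equals $1$ precisely on the event $\{X > n_0\} = \{X \ge n_0 + 1\}$. Hence
\[ E(\eta_k) = \sum_{x = n_0+1}^{n} E\bigl(\zeta_k\, \mathbf{1}\{X = x\}\bigr), \]
and the task reduces to evaluating each summand.

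For a fixed $x$ I would split the summand according to the sign of $\zeta_k$ itself, writing $E(\zeta_k\,\mathbf{1}\{X=x\}) = E(\zeta_k\,\mathbf{1}\{X=x,\,\zeta_k>0\}) + E(\zeta_k\,\mathbf{1}\{X=x,\,\zeta_k\le 0\})$. The key observation is that $X = \mathbf{1}\{\zeta_k>0\} + X'$, where $X' = \#\{j\ne k : \zeta_j>0\} \sim \mathrm{Binomial}(n-1,p)$ is independent of $\zeta_k$. Thus, conditionally on $\zeta_k>0$ the event $\{X=x\}$ becomes $\{X'=x-1\}$, while conditionally on $\zeta_k\le 0$ it becomes $\{X'=x\}$. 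Factoring $\zeta_k$ away from $X'$ by independence and using $E(\zeta\mid\zeta>0)=E^+$, $E(\zeta\mid\zeta\le 0)=-E^-$, together with $E\bigl(\zeta_k\mathbf{1}\{\zeta_k>0\}\bigr)=E^+p$ and $E\bigl(\zeta_k\mathbf{1}\{\zeta_k\le 0\}\bigr)=-E^-q$, each summand becomes
\[ E(\zeta_k\,\mathbf{1}\{X=x\}) = E^+\binom{n-1}{x-1}p^{x}q^{n-x} - E^-\binom{n-1}{x}p^{x}q^{n-x}. \]

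The final step is a purely combinatorial simplification. Applying the absorption identities $\binom{n-1}{x-1} = \frac{x}{n}\binom{n}{x}$ and $\binom{n-1}{x} = \frac{n-x}{n}\binom{n}{x}$ factors out $\binom{n}{x}p^{x}q^{n-x}$ and leaves the coefficient $E^+\frac{x}{n} - E^-\frac{n-x}{n} = (E^++E^-)\frac{x}{n} - E^-$, which is exactly the bracketed term in \eqref{voting_sample}; summing over $x$ from $n_0+1$ to $n$ then yields the claim. I do not expect a serious obstacle: the argument is elementary once the conditioning is set up, and the only point demanding care is the index bookkeeping in the second paragraph --- that a positive $\zeta_k$ already accounts for one of the $x$ positives, so only $x-1$ remain among the other $n-1$ components, whereas a non-positive $\zeta_k$ leaves all $x$ to be distributed there. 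Getting these two coefficients right is precisely what produces the asymmetric factor $(E^++E^-)\frac{x}{n}-E^-$. As sanity checks I would verify the boundary cases $n_0=n$ (empty sum, giving $E(\eta_k)=0$, i.e.\ guaranteed rejection) and $n_0=-1$ (full sum, which collapses via $\sum_x x\binom{n}{x}p^xq^{n-x}=np$ to $E^+p - E^-q = E(\zeta)=\mu$, since $X\ge 0$ always).
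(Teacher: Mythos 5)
Your proof is correct and follows essentially the same route as the paper's: both condition on the number $X=n^{+}$ of positive components, split the $k$-th summand by the sign of $\zeta_k$, and use independence to identify the conditional means with $E^{+}$ and $-E^{-}$. The only cosmetic difference is that where the paper simply asserts $P\{\eta_k>0 \mid n^{+}=x\}=\tfrac{x}{n}$ and ${\rm E}(\eta_k \mid n^{+}=x,\,\eta_k>0)=E^{+}$, your decomposition $X=\mathbf{1}\{\zeta_k>0\}+X'$ with $X'\sim\mathrm{Bin}(n-1,p)$ independent of $\zeta_k$, combined with the absorption identities $\binom{n-1}{x-1}=\tfrac{x}{n}\binom{n}{x}$ and $\binom{n-1}{x}=\tfrac{n-x}{n}\binom{n}{x}$, derives those facts explicitly --- a slightly more self-contained write-up of the same argument.
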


    \begin{proof} According to the formula of total probability for mathematical expectations, we have
        \begin{eqnarray}\label{expectation_as_sum}
            {\rm E}(\eta_k) = 0 \cdot P\{n^+ \leq n_0\} \; + \sum\limits_{x=n_0+1}^{n}
                {\rm E}(\eta_k \; | \; n^+ = x) P\{n^+ = x\}
        \end{eqnarray}
        for any $k =1,...,n$ (hereafter we use $\eta$ instead of $\eta_k$). Furthermore,

        \begin{eqnarray}\label{e_sum}
                {\rm E}(\eta \; | \; n^+ = x) = {\rm E}(\eta \; | \; n^+ = x, \; \eta > 0) \, P\{\eta > 0 \; | \; n^+ = x \} \\
                    + {\rm E}(\eta \; | \; n^+ = x, \; \eta \leq 0) \, P\{\eta \leq 0 \; | \; n^+ = x \}, \nonumber
        \end{eqnarray}

        \begin{equation}\label{e_positive}
                P\{\eta > 0 \; | \; n^+ = x \} = \frac{x}{n}, \;
        \end{equation}

        and

        \begin{equation}\label{e_negative}
                P\{\eta \leq 0 \; | \; n^+ = x \} = 1 - \frac{x}{n}.
        \end{equation}

        Using the fact that by the
        independence of the components of the proposal, for all $x>n_0$ we have ${\rm E}(\eta \; | \; n^+ = x, \; \eta > 0) = {\rm E}(\eta \; | \; \eta > 0)
        = {\rm E}(\zeta \; | \; \zeta > 0) = E^+$
        and similarly $\big|{\rm E}(\eta \; | \; n^+ = x, \; \eta \leq 0)\big| = \big|{\rm E}(\eta \; | \; n^+>n_0, \; \eta \leq 0)\big|
        = \big|{\rm E}(\zeta \; | \; \zeta \leq 0)\big| = E^-$
        and substituting \eqref{e_sum}--\eqref{e_negative} and $P\{n^+=x\} = \begin{pmatrix} n \\ x \end{pmatrix} p^x q^{n-x}$
        (where $p$ is the probability that a proposal component is positive and
        $q = 1- p$) into \eqref{expectation_as_sum} we get \eqref{voting_sample}.
        \qed
    \end{proof}

    \begin{corollary}
        \label{corollary:zeroAlpha}
        Let $\bm\eta=(\eta_1,\ldots,\eta_n)$ be a voting sample from some distribution with an absolute voting threshold $n_0 = 0$. Then, for any $k=1,...,n,$
            \begin{eqnarray}\label{zero_voting_sample}
                {\rm E}(\eta_k) = \mu + E^-(1-p)^n,
            \end{eqnarray}
        where $\mu = {\rm E}(\zeta)$
        and the other notations are defined in Theorem \ref{lemm:1}.
    \end{corollary}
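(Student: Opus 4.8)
The plan is to specialize Theorem~\ref{lemm:1} to $n_0 = 0$ and then collapse the resulting binomial sum using two elementary identities. Setting $n_0 = 0$ in \eqref{voting_sample}, the summation index runs from $x = 1$ to $n$, so I would first split the summand into its two additive pieces:
$$
{\rm E}(\eta_k) = \frac{E^+ + E^-}{n}\sum_{x=1}^{n} x \binom{n}{x} p^x q^{n-x} \; - \; E^- \sum_{x=1}^{n} \binom{n}{x} p^x q^{n-x}.
$$

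For the first sum, the $x = 0$ term contributes nothing, so it equals the full binomial mean $\sum_{x=0}^{n} x \binom{n}{x} p^x q^{n-x} = np$; dividing by $n$ turns the first piece into $(E^+ + E^-)\,p$. For the second sum I would add and subtract the missing $x = 0$ term to obtain $\sum_{x=0}^{n}\binom{n}{x} p^x q^{n-x} - q^n = (p+q)^n - q^n = 1 - (1-p)^n$, since $p + q = 1$ and $q = 1 - p$. Substituting both evaluations gives
$$
{\rm E}(\eta_k) = (E^+ + E^-)\,p - E^-\bigl(1 - (1-p)^n\bigr).
$$

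The final step is to recognize the mean of $\zeta$ inside this expression. By the law of total expectation, $\mu = {\rm E}(\zeta) = E^+ p - E^- q = E^+ p - E^-(1-p)$, because ${\rm E}(\zeta \mid \zeta \le 0) = -E^-$ and $P\{\zeta \le 0\} = q$. Expanding $(E^+ + E^-)p - E^- = E^+ p - E^-(1-p) = \mu$ and carrying along the leftover term $E^-(1-p)^n$ then yields the claimed identity ${\rm E}(\eta_k) = \mu + E^-(1-p)^n$. There is no genuine obstacle here; the only point requiring care is the bookkeeping of signs in the relation $\mu = E^+ p - E^- q$, which stems from the absolute-value convention $E^- = \big|{\rm E}(\zeta \mid \zeta \le 0)\big|$ adopted in Theorem~\ref{lemm:1}.
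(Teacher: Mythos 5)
Your proof is correct and follows essentially the same route as the paper: both specialize \eqref{voting_sample} to $n_0=0$, split the sum into the binomial-mean piece $(E^+ + E^-)p$ and the piece $E^-\bigl(1-(1-p)^n\bigr)$, and then invoke $pE^+ - (1-p)E^- = \mu$. The only difference is cosmetic — you spell out the law-of-total-expectation justification for $\mu = E^+p - E^-q$, which the paper simply asserts.
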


    \begin{proof}
        Using the properties of the binomial distribution we get

        \begin{align*} 
            {\rm E}(\eta_k) &= \sum\limits_{x=1}^{n}
                    \left((E^+ + E^-)\frac{x}{n} - E^-\right) \begin{pmatrix} n \\ x \end{pmatrix} p^x q^{n-x} \\  \nonumber
            &= (E^+ + E^-) \sum\limits_{x=0}^{n} \frac{x}{n} \begin{pmatrix} n \\ x \end{pmatrix} p^x q^{n-x}
               - E^- \sum\limits_{x=1}^{n} \begin{pmatrix} n \\ x \end{pmatrix} p^x q^{n-x} \\  \nonumber
            &= (E^+ + E^-) p - E^- \left(1 - \begin{pmatrix} n \\ 0 \end{pmatrix} p^0 q^{n}\right)  \\ \nonumber
            &= p E^+ - (1-p) E^- + E^-(1-p)^n. \nonumber
        \end{align*}

        Since $p E^+ - (1-p) E^- = \mu,$ we have \eqref{zero_voting_sample}. \qed

    \end{proof}

\subsection{A general expression for the optimal voting threshold}
\label{sec:general}

    For each specific environment, there is an $optimal$ $acceptance$ $threshold$\footnote{See \citeauthor{nitzan1982} (\citeyear{nitzan1982}) and
    \citeauthor{azr2014} (\citeyear{azr2014}) on other approaches to optimizing the majority threshold and \citeauthor{rae1969} (\citeyear{rae1969}) and
    \citeauthor{sek2015} (\citeyear{sek2015}) for a discussion of the case of multiple voting in this context.}
    $\alpha_0$ that provides the highest possible
    expected utility increment ${\rm E}(\eta)$ of an agent.

    The optimal acceptance threshold for the normal distribution as a function of the environment parameters has been studied in
    \citeauthor{opt2018} (\citeyear{opt2018}). This threshold turns out to be independent of the size of the society $n$.

    Voting with the optimal acceptance thresholds always yields positive expected utility increments
    and so it is devoid of ``pits of losses.''

    The following theorem provides a general expression for the optimal voting threshold, which holds for any distribution
    that has a mathematical expectation.

    \begin{theorem}
        \label{the:1}
        In a society consisting of egoists, the optimal voting threshold is
            \begin{eqnarray}\label{optimal_threshold}
                \alpha_0 = \left(1+\frac{E^+}{E^-}\right)^{-1},
            \end{eqnarray}
        where $E^- = \big|{\rm E}(\zeta \; | \; \zeta \leq 0)\big|, E^+ = {\rm E}(\zeta \; | \; \zeta > 0),$ and $\zeta$ is the random variable that determines the utility increment of any agent in a random proposal$.$
    \end{theorem}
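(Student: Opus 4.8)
The plan is to build directly on the closed-form expression for the expected utility increment established in Theorem~\ref{lemm:1}. Writing the absolute threshold as $n_0$, that theorem gives
\[
    {\rm E}(\eta) = \sum_{x=n_0+1}^{n} c_x, \qquad c_x := \left((E^+ + E^-)\frac{x}{n} - E^-\right)\binom{n}{x}p^x q^{n-x},
\]
so optimizing the threshold means choosing the cutoff $n_0$ that maximizes this sum.

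The decisive observation is that the binomial weight $\binom{n}{x}p^x q^{n-x}$ is strictly positive for every $x$, so the sign of each summand $c_x$ is governed entirely by the linear factor $(E^+ + E^-)\frac{x}{n} - E^-$. This factor is positive precisely when $\frac{x}{n} > \frac{E^-}{E^+ + E^-}$, that is, when $x > \alpha_0 n$ with $\alpha_0 = \frac{E^-}{E^+ + E^-}$; it is negative for $x < \alpha_0 n$ and vanishes at $x = \alpha_0 n$. Since the admissible thresholds amount to selecting a cutoff and summing all $c_x$ above it, the sum is maximized exactly when it retains every positive term and discards every negative one. This is achieved by imposing the acceptance condition $n^+ > \alpha_0 n$, i.e., by taking the relative threshold equal to $\alpha_0$: any smaller threshold drags in negative terms and any larger one omits positive ones, in either case strictly lowering the total. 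Rewriting $\frac{E^-}{E^+ + E^-} = \left(1 + \frac{E^+}{E^-}\right)^{-1}$ then yields \eqref{optimal_threshold}.

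The one point requiring care is the interface between the continuous threshold $\alpha$ and the integer count $n^+$: one genuinely optimizes over the finitely many distinct acceptance sets $\{x : x > \alpha n\}$, and I would verify that the set picked out by $\alpha = \alpha_0$ coincides with the sign-positive set $\{x : x > \alpha_0 n\}$ (the boundary value $x = \alpha_0 n$, when integral, contributes $c_x = 0$ and is therefore immaterial to which cutoff is optimal). Because the entire argument compares the ratio $x/n$ against the fixed constant $\alpha_0$, the optimal relative threshold carries no dependence on $n$, in agreement with the remark preceding the statement.
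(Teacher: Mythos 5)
Your proof is correct and follows essentially the same route as the paper's: both start from the sum in Theorem~\ref{lemm:1}, note that the binomial weights are positive so each term's sign is governed by the factor $(E^+ + E^-)\frac{x}{n} - E^-$, and conclude that the threshold $\alpha_0 = \frac{E^-}{E^+ + E^-}$ retains exactly the positive terms and hence maximizes the expectation. Your added care about the boundary case $x = \alpha_0 n$ and the finitely many distinct acceptance sets matches the paper's closing remark that any $\alpha_1$ with $[\alpha_1 n] = [\alpha_0 n]$ is equally optimal.
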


    In terms of the value $R = \frac{E^+}{E^-},$ which we call the $win/loss$ $magnitude$ $ratio,$ equation \eqref{optimal_threshold} takes the form

    \begin{eqnarray}\label{optimal_threshold_1}
        \alpha_0 = \left(1+R\right)^{-1}. \nonumber
    \end{eqnarray}

    \begin{proof}
        Consider the expected social welfare increase when some proposal is adopted:
        \begin{equation}\label{expectedSocWelf}
            n^+ E^+ - (n - n^+) E^-, \nonumber
        \end{equation}
        where $n^+$ is the number of positive components in a proposal.

        This expression is positive if and only if $\frac{n^+}{n} > \frac{E^-}{E^+ + E^-}.$

        We obtained an analytical expression for the expected utility increment as the sum \eqref{voting_sample} in Theorem \ref{lemm:1}.
        Let us consider the sign of the sum terms. They are positive if and only if $\frac{x}{n} > \frac{E^-}{E^+ + E^-}.$

        Therefore, the voting threshold $\alpha_0 = \frac{E^-}{E^+ + E^-} = \left(1+\frac{E^+}{E^-}\right)^{-1}$
        allows us to take exactly all positive terms getting the maximum sum.
        Consequently, this threshold is optimal for society and for each agent due to
        uniformity.
        \footnote{This result can also be obtained by applying Theorem 1 in \citeauthor{barb2006} (\citeyear{barb2006}) if we consider each agent as a country
        with $n_i = 1$ (population) and a simple voting behaviour of the representative.
        In this case, $\alpha_0$-majority maximizes social and individual
        welfare. In the proof of Theorem \ref{the:1}, we provide a simpler argument for the case under consideration. Theorem 1 in \citeauthor{azr2014} (\citeyear{azr2014})
        can also be used for this proof if we consider environment proposals (in the ViSE model) as agent types in their model.}

        \qed

        Note that if a threshold $\alpha$ is optimal and $[\alpha_1 n]=[\alpha n],$ then $\alpha_1$ is also an optimal threshold.

    \end{proof}


    Let $\bar\alpha_0$ be the center of the half-interval of optimal acceptance thresholds for
    fixed $n,\sigma$, and $\mu$. Then this half-interval is $[\bar\alpha_0-\tfrac1{2n}, \bar\alpha_0+\tfrac1{2n}[$.
    Figures \ref{fig:uniform} to \ref{fig:laplace} show the dependence of $\bar\alpha_0$ on $\rho=\mu/\sigma$
    for several distributions used for the generation of proposals.

    As one can observe for various distributions,
    outside the segment $\rho\in[-0.7,\, 0.7]$, if an acceptance threshold is close to the optimal one and the number of
    participants is appreciable, then the proposals are almost always accepted (to the right of the segment) or almost always
    rejected (to the left of this segment). Therefore, in these cases, the issue of determining the exact optimal
    threshold loses its practical value.

\subsection{Proposals generated by continuous uniform distributions}
\label{sec:2}

    Let $-a < 0$ and $b > 0$ be the minimum and maximum values of a continuous uniformly distributed random variable, respectively.

    \begin{corollary}
        \label{corollary:1}
            The optimal majority/acceptance threshold in the case of proposals generated by the continuous uniform distribution on the segment $[-a, b]$ with $-a<0$ and $b>0$ is
                \begin{eqnarray}\label{uniform}
                    \alpha_0 = \left(1+\frac{b}a\right)^{-1}.
                \end{eqnarray}
    \end{corollary}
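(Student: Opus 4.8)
The plan is to apply Theorem \ref{the:1} directly: since that theorem gives $\alpha_0 = \left(1+\frac{E^+}{E^-}\right)^{-1}$ for \emph{any} distribution possessing a mathematical expectation, all that remains is to compute the win/loss magnitude ratio $R = E^+/E^-$ for the continuous uniform distribution on $[-a,b]$ and verify that it equals $b/a$. The desired formula \eqref{uniform} then follows by substitution.

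First I would record the density, which is the constant $\frac{1}{a+b}$ on $[-a,b]$ and zero elsewhere. The key observation is that conditioning a uniform random variable on its lying in a subinterval again yields a uniform distribution on that subinterval. Hence, conditional on $\zeta>0$, the variable $\zeta$ is uniform on $[0,b]$, so that $E^+ = {\rm E}(\zeta\mid\zeta>0) = \frac{b}{2}$; and conditional on $\zeta\le 0$, it is uniform on $[-a,0]$, so that ${\rm E}(\zeta\mid\zeta\le 0) = -\frac{a}{2}$ and therefore $E^- = \bigl|{\rm E}(\zeta\mid\zeta\le 0)\bigr| = \frac{a}{2}$. (The inclusion of the single boundary point $\zeta=0$ in the conditioning event is immaterial, since it carries zero probability for a continuous distribution.) Alternatively, if one prefers to avoid invoking the midpoint property, these two quantities can be obtained by the routine integrals $E^+ = \frac{1}{p}\int_0^b \frac{x}{a+b}\,dx$ and $E^- = \frac{1}{q}\bigl|\int_{-a}^0 \frac{x}{a+b}\,dx\bigr|$ with $p=\frac{b}{a+b}$ and $q=\frac{a}{a+b}$; the normalizing factors $p$ and $q$ cancel the endpoints cleanly.

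Forming the ratio gives $R = \dfrac{E^+}{E^-} = \dfrac{b/2}{a/2} = \dfrac{b}{a}$, and substituting into \eqref{optimal_threshold} yields $\alpha_0 = \left(1+\frac{b}{a}\right)^{-1}$, as claimed. I do not anticipate any genuine obstacle here: the entire content is the elementary conditional-expectation computation, and the only point requiring a moment's care is confirming that the conditional means are indeed the interval midpoints (equivalently, that the conditioning normalization is handled correctly), which the uniformity of the restricted distribution settles at once.
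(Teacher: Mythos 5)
Your proposal is correct and follows exactly the paper's route: the paper likewise proves Corollary \ref{corollary:1} by noting $E^+ = \frac{b}{2}$, $E^- = \frac{a}{2}$, hence $R = \frac{b}{a}$, and substituting into Theorem \ref{the:1}. You merely spell out the conditional-expectation computation (via the midpoint property of the restricted uniform distribution) that the paper leaves implicit, so there is nothing to correct.
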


    Indeed, in this case, $E^- = \frac{a}{2}, E^+ = \frac{b}{2},$
    and $ R = \frac{b}{a},$ hence, \eqref{optimal_threshold} provides \eqref{uniform}.

    \begin{figure}
      \includegraphics{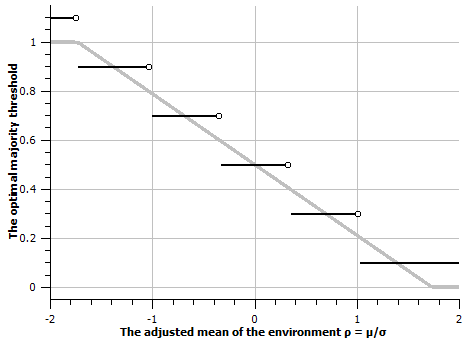}
    \caption{The center $\bar a_0$ of the half-interval of optimal majority/acceptance thresholds $($a ``ladder''$)$ for $n=5$ and the optimal threshold \eqref{uniform} as functions of $\rho$ for continuous uniform distributions.}
    \label{fig:uniform}       
    \end{figure}

    If $b$ approaches 0 from above, then $\alpha_0$ approaches 1 from below, and the optimal voting procedure is unanimity. Indeed,
    positive proposed utility increments become much smaller in absolute value than negative ones, therefore, each participant
    should be able to reject a proposal.

    As $-a$ approaches 0 from below,
    negative proposed utility increments become much smaller in absolute value than positive ones. Therefore, a
    ``coalition'' consisting of any single voter should be able to accept a proposal. In accordance with this, the optimal relative threshold
     $\alpha_0$ decreases to 0.

    \begin{corollary}
        \label{corollary:2}
        In terms of the adjusted mean of the environment $\rho=\mu/\sigma,$ it holds that for the continuous uniform distribution,
            \begin{eqnarray}\label{uniform1}
                \alpha_0 =
                            \begin{cases}
                                1, &\text{$\rho \leq -\sqrt{3}$},\\
                                \frac{1}{2} \left(1 - \frac{\rho}{\sqrt{3}} \right), &\text{$-\sqrt{3} < \rho < \sqrt{3}$},\\
                                0, &\text{$\rho \geq \sqrt{3}$}.
                            \end{cases}
            \end{eqnarray}
    \end{corollary}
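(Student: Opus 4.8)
The plan is to take the closed form $\alpha_0 = \left(1+\frac{b}{a}\right)^{-1} = \frac{a}{a+b}$ supplied by Corollary~\ref{corollary:1} and simply re-express it through $\rho=\mu/\sigma$. The only inputs I need are the first two moments of the uniform law on $[-a,b]$, so there is no new probabilistic content here; the work is entirely a change of variables.

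First I would record the standard moments of this distribution, namely $\mu=\frac{b-a}{2}$ and $\sigma^2=\frac{(a+b)^2}{12}$, whence $\sigma=\frac{a+b}{2\sqrt{3}}$. Dividing one by the other and cancelling the common factor $a+b$ yields
\begin{equation}
    \rho=\frac{\mu}{\sigma}=\frac{\sqrt{3}\,(b-a)}{a+b}. \nonumber
\end{equation}
Second, I would connect this to $\alpha_0$ via the identity $\frac{b-a}{a+b}=\frac{b}{a+b}-\frac{a}{a+b}=(1-\alpha_0)-\alpha_0=1-2\alpha_0$. Substituting into the previous display gives $\frac{\rho}{\sqrt{3}}=1-2\alpha_0$, and solving for $\alpha_0$ produces the middle branch $\alpha_0=\frac{1}{2}\left(1-\frac{\rho}{\sqrt{3}}\right)$ of \eqref{uniform1}.

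The one point requiring care — and the step I expect to be the main obstacle — is the range of validity together with the two constant branches. Because a genuine two-sided uniform law requires $a>0$ and $b>0$, the expression $\rho=\sqrt{3}(b-a)/(a+b)$ is confined to the open interval $(-\sqrt{3},\sqrt{3})$, and the endpoints are reached only in the degenerate one-sided limits. Letting $b\to 0^{+}$ drives $\rho\to-\sqrt{3}$ while $\alpha_0=\frac{a}{a+b}\to 1$, and letting $a\to 0^{+}$ drives $\rho\to\sqrt{3}$ while $\alpha_0\to 0$. These limiting values — already anticipated in the discussion following Corollary~\ref{corollary:1} — supply the boundary cases $\rho\le-\sqrt{3}$ and $\rho\ge\sqrt{3}$ by continuity, thereby extending the middle formula to the full piecewise statement \eqref{uniform1}.
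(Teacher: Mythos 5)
Your proof is correct and follows essentially the same route as the paper, which simply cites the formula $\alpha_0=\left(1+\frac{b}{a}\right)^{-1}$ from Corollary~\ref{corollary:1} together with the moment expressions $\mu=\frac{-a+b}{2}$ and $\sigma=\frac{b+a}{2\sqrt{3}}$. Your explicit treatment of the constant branches via the one-sided limits $b\to 0^{+}$ and $a\to 0^{+}$ is a welcome addition that the paper leaves implicit.
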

    This follows from \eqref{uniform} and the expressions $\mu = \frac{-a+b}{2}$ and $\sigma = \frac{b+a}{2\sqrt{3}}.$
    It is worth mentioning that the dependence of $\alpha_0$ on $\rho$ is linear, as distinct from \eqref{uniform}.

    Figure \ref{fig:uniform} illustrates the dependence of the center of the half-interval
    of optimal majority/acceptance thresholds versus $\rho=\mu/\sigma$ for continuous uniform distributions in the segment $\rho\in[-2,\, 2]$.

\subsection{Proposals generated by normal distributions}
\label{sec:3}

    \begin{figure}
      \includegraphics{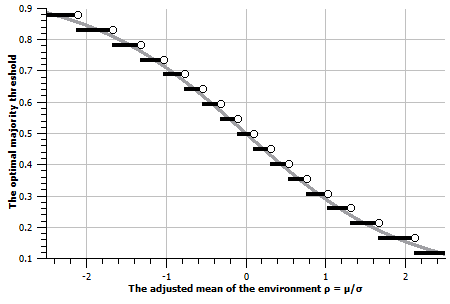}
    \caption{The center $\bar a_0$ of the half-interval of optimal majority/acceptance thresholds $($a ``ladder''$)$ for $n=21$ and the optimal threshold \eqref{normal} as functions of $\rho$ for normal distributions.}
    \label{fig:normal}       
    \end{figure}

    For normal distributions, the following corollary holds.
    \begin{corollary}
        \label{corollary:3}
            The optimal majority/acceptance threshold in the case of proposals generated by the normal distribution with
            parameters $\mu$ and $\sigma$ is
                \begin{eqnarray}\label{normal}
                    \alpha_0 = F(\rho) \left( 1 - \frac{\rho F(-\rho)}{f(\rho)} \right),
                \end{eqnarray}
            where
            $\,\rho = \mu/\sigma,$
            while $\,F(\cdot)$ and $f(\cdot)\,$ are the standard normal cumulative distribution function and density, respectively.
    \end{corollary}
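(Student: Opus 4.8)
The plan is to specialize the general formula $\alpha_0 = E^-/(E^+ + E^-) = \left(1 + E^+/E^-\right)^{-1}$ from Theorem \ref{the:1} to the normal case by computing the two conditional expectations $E^+$ and $E^-$ explicitly. Writing $\zeta = \mu + \sigma Z$ with $Z$ standard normal, the event $\{\zeta > 0\}$ becomes $\{Z > -\rho\}$ and $\{\zeta \le 0\}$ becomes $\{Z \le -\rho\}$, so the whole problem reduces to the conditional means of a truncated standard normal variable. First I would record the probabilities $p = P\{\zeta > 0\} = P\{Z > -\rho\} = F(\rho)$ and $q = 1 - p = F(-\rho)$, using the symmetry $F(-x) = 1 - F(x)$.

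The core computation is the conditional expectation of the truncated normal. The key analytic fact is $f'(z) = -z f(z)$, which yields $\int_c^{\infty} z f(z)\,dz = f(c)$ and $\int_{-\infty}^c z f(z)\,dz = -f(c)$. From these, together with $f(-\rho) = f(\rho)$, one gets ${\rm E}(Z \mid Z > -\rho) = f(\rho)/F(\rho)$ and ${\rm E}(Z \mid Z \le -\rho) = -f(\rho)/F(-\rho)$. Transforming back via $\zeta = \mu + \sigma Z$ gives
\begin{equation*}
E^+ = \sigma\Bigl(\rho + \frac{f(\rho)}{F(\rho)}\Bigr), \qquad E^- = \sigma\Bigl(\frac{f(\rho)}{F(-\rho)} - \rho\Bigr),
\end{equation*}
where the nonnegativity of $E^-$ is automatic because conditioning on $\zeta \le 0$ cannot produce a positive mean.

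Finally I would substitute into $\alpha_0 = E^-/(E^+ + E^-)$. On adding the two expressions the $\pm\sigma\rho$ terms cancel, and the normalization $F(\rho) + F(-\rho) = 1$ collapses the sum to $E^+ + E^- = \sigma f(\rho)/\bigl(F(\rho)F(-\rho)\bigr)$. Writing $E^- = \sigma\bigl(f(\rho) - \rho F(-\rho)\bigr)/F(-\rho)$ and dividing by this, the factors of $\sigma$ and $F(-\rho)$ drop out, leaving exactly $\alpha_0 = F(\rho)\bigl(1 - \rho F(-\rho)/f(\rho)\bigr)$, as claimed in \eqref{normal}.

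The main obstacle is getting the truncated-normal conditional means right: tracking the signs and identifying which tail each inverse Mills ratio belongs to. Once those two expressions are in hand, the remaining algebra is routine, and the single identity $F(\rho) + F(-\rho) = 1$ is what makes the final simplification clean.
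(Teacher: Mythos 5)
Your proposal is correct and follows essentially the same route as the paper, which derives Corollary \ref{corollary:3} by substituting $E^+ = \sigma\left(\rho + \frac{f(\rho)}{F(\rho)}\right)$ and $E^- = -\sigma\left(\rho - \frac{f(\rho)}{F(-\rho)}\right)$ (found by integration) into Theorem \ref{the:1}. Your truncated-normal computations and the final simplification via $F(\rho)+F(-\rho)=1$ match the paper's expressions exactly, merely spelling out steps the paper leaves implicit.
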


    Corollary \ref{corollary:3} follows from Theorem \ref{the:1} and the facts that $E^- = -\sigma \left(\rho - \frac{f(\rho)}{F(-\rho)}\right)$ and
    $E^+ = \sigma \left(\rho + \frac{f(\rho)}{F(\rho)}\right),$ which can be easily found by integration.  Note that Corollary \ref{corollary:3} strengthens the
    first statement of Theorem 1 in \citeauthor{opt2018} (\citeyear{opt2018}).

    Figure \ref{fig:normal} illustrates the dependence of the center of the half-interval
    of optimal majority/acceptance thresholds versus $\rho=\mu/\sigma$ for normal distributions in the segment $\rho\in[-2.5,\, 2.5]$.

    We refer to \citeauthor{opt2018} (\citeyear{opt2018}) for some additional properties (e.g., the rate of change of the
    optimal voting threshold as a function of $\rho$).

\subsection{Proposals generated by symmetrized Pareto distributions}
\label{sec:4}

    \begin{figure}
      \includegraphics{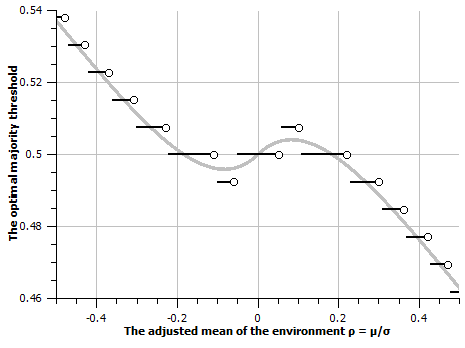}
    \caption{The center $\bar a_0$ of the half-interval of optimal majority/acceptance thresholds $($a ``ladder''$)$ for $n=131$ (odd) and the optimal threshold \eqref{pareto} as functions of $\rho$ for symmetrized Pareto distributions with $k$ = 8.}
    \label{fig:pareto}       
    \end{figure}
    \begin{figure}
      \includegraphics{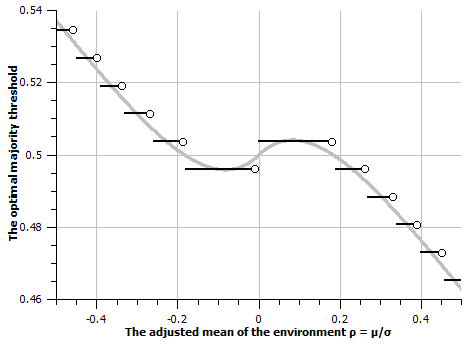}
    \caption{The center $\bar a_0$ of the half-interval of optimal majority/acceptance thresholds $($a ``ladder''$)$ for $n=130$ (even) and the optimal threshold \eqref{pareto} as functions of $\rho$ for symmetrized Pareto distributions with $k$ = 8.}
    \label{fig:pareto_even}       
    \end{figure}

    Pareto distributions are widely used for modeling social, linguistic, geophysical, financial, and some other types of data.
    The Pareto distribution with positive parameters $k$ and $a$ can be defined by  means of the function $P\{\xi > x\} = \left(\frac{a}{x}\right)^{k},$ where $\xi \in [a,\infty)$ is a random variable.

    The ViSE model normally involves distributions that allow both positive and negative values. Consider the $symmetrized$
    $Pareto$ $distributions$ (see \citeauthor{cheb2018} (\citeyear{cheb2018}) for more details). For its construction, the density function
    $f(x) = \frac{k}{x} \left(\frac{a}{x}\right)^{k}$
    of the Pareto distribution is divided by 2 and combined with its reflection w.r.t. the line $x = a$.

    The density of the resulting distribution with mode (and median) $\mu$ is
    \begin{equation}\label{e_diff}
            f(x) = \frac{k}{2a} \left(\frac{|x-\mu|}{a} + 1\right)^{-(k+1)}. \nonumber
    \end{equation}

    For symmetrized Pareto distributions with $k>2,$ the following result holds true.
    \begin{corollary}
        \label{corollary:4}
            The optimal majority/acceptance threshold in the case of proposals generated by the symmetrized Pareto
            distribution with parameters $\mu,$ $\sigma,$ and $k>2$ is
                \begin{equation}\label{pareto}
                    \alpha_0 = \frac{1}{2}\left(1+{\rm sign}(\rho)\frac{1-(k-2)\hat\rho-(1+\hat\rho)^{-k+1}}{1+k\hat\rho}\right)
                \end{equation}
            where $\,\rho = \frac{\mu}{\sigma}$,
            $C = \sqrt{\frac{(k-1)(k-2)}{2}} = \frac{a}{\sigma}$, and $\hat\rho=|\rho/C|=|\mu/a|$.
    \end{corollary}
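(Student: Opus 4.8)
The plan is to apply Theorem~\ref{the:1}, which reduces the problem to computing the two conditional means $E^+={\rm E}(\zeta\mid\zeta>0)$ and $E^-=\big|{\rm E}(\zeta\mid\zeta\le0)\big|$ for the symmetrized Pareto law and then forming $\alpha_0=E^-/(E^++E^-)$. Before integrating, I would exploit the reflection symmetry of the density about its mode $\mu$: replacing $\zeta$ by $-\zeta$ sends $\mu\mapsto-\mu$ (hence $\rho\mapsto-\rho$, while $\hat\rho=|\mu/a|$ is unchanged) and interchanges $E^+$ and $E^-$, so that $\alpha_0\mapsto 1-\alpha_0$. This is exactly what the prefactor $\tfrac12\bigl(1+{\rm sign}(\rho)(\cdots)\bigr)$ encodes, and it lets me compute under the assumption $\mu\ge0$ and recover the case $\mu<0$ at the end. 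At $\rho=0$ symmetry forces $\alpha_0=\tfrac12$, consistent with the bracket vanishing there.

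For $\mu\ge0$, I would carry out the integrations against the centered density $g(y)=\tfrac{k}{2a}(|y|/a+1)^{-(k+1)}$, $y=x-\mu$. The tail identity $P\{\zeta\le0\}=\tfrac12(1+\hat\rho)^{-k}$ (from $\int_t^\infty g$) gives $q$ and $p=1-q$. For the negative half-line the substitution $u=(\mu-x)/a+1$ turns $I^-:=\int_{-\infty}^{0}(-x)f(x)\,dx$ into the elementary integrals $\int_{1+\hat\rho}^\infty u^{-k}\,du$ and $\int_{1+\hat\rho}^\infty u^{-(k+1)}\,du$; after dividing by $q$ the answer collapses to the clean form $E^-=a(1+\hat\rho)/(k-1)$. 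Rather than integrating the positive half-line directly — where $|x-\mu|$ changes form at $x=\mu$ — I would get $I^+:=\int_0^\infty x f(x)\,dx$ from the mean via $I^+-I^-={\rm E}(\zeta)=\mu$, and then $E^+=I^+/p$. The hypothesis $k>2$ is what guarantees convergence of these integrals (and of the variance below).

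Finally I would assemble $\alpha_0=(1+E^+/E^-)^{-1}$. Writing $w=1+\hat\rho$, the ratio $E^+/E^-$ becomes a quotient whose numerator is $(k-1)(w-1)+\tfrac12 w^{1-k}$ and whose denominator is $w-\tfrac12 w^{1-k}$; the pleasant simplification is that their sum equals $kw-(k-1)=1+k\hat\rho$, which is precisely the common denominator in \eqref{pareto}. A short rearrangement then casts the numerator into the stated shape $1-(k-2)\hat\rho-(1+\hat\rho)^{1-k}$. The last ingredient is the passage from $a$ to $\sigma$: computing $\sigma^2=\int_{-\infty}^\infty y^2 g(y)\,dy=2a^2/((k-1)(k-2))$ (again via a power-integral substitution, valid for $k>2$) yields $a/\sigma=C=\sqrt{(k-1)(k-2)/2}$ and hence $\hat\rho=|\rho/C|$, completing the identification with \eqref{pareto}.

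I expect the main obstacle to be purely the bookkeeping of the half-line integrals when the split point $0$ is displaced from the mode $\mu$, since one must keep the two branches of $|x-\mu|$ straight. The $I^+=\mu+I^-$ shortcut and the collapse $\text{(num)}+\text{(denom)}=1+k\hat\rho$ are the two observations that keep this computation from becoming unwieldy.
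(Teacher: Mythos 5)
Your proposal is correct and takes essentially the same route as the paper: apply Theorem~\ref{the:1} to the symmetrized Pareto law, compute $E^-$ and $E^+$, and simplify $\left(1+E^+/E^-\right)^{-1}$; your intermediate values ($q=\tfrac12(1+\hat\rho)^{-k}$, $E^-=a(1+\hat\rho)/(k-1)$, $E^+$ with numerator $\mu+\tfrac{a}{2(k-1)}(1+\hat\rho)^{1-k}$, and $\sigma^2=2a^2/((k-1)(k-2))$, hence $a/\sigma=C$) all agree with those in the paper's proof. The only differences are labor-saving devices within the same approach: you get $I^+$ from the mean identity $I^+=\mu+I^-$ instead of integrating the two branches of the positive half-line separately as the paper does, you handle $\mu\le0$ by the reflection symmetry $\alpha_0\mapsto1-\alpha_0$ rather than by a second computation (which the paper only asserts with ``similarly''), and you make explicit the final algebraic collapse $N+D=1+k\hat\rho$ that the paper leaves implicit.
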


    Corollary \ref{corollary:4} follows from Theorem \ref{the:1} and the facts (their proof is given below) that:

    $E^- = \sigma \left(\frac{C + \rho}{k-1}\right),$
    $E^+ = \frac{\sigma}{1 - \frac{1}{2}\left( \frac{C}{C + \rho} \right)^k}
        \left(\rho + \left( \frac{C}{C + \rho} \right)^k
        \frac{C + \rho}{2(k-1)}\right)$
    whenever $\mu > 0;$

    $E^- = -\frac{\sigma}{1 - \frac{1}{2}\left( \frac{C}{C - \rho} \right)^k}
        \left(\rho - \left( \frac{C}{C - \rho} \right)^k
        \frac{C - \rho}{2(k-1)}\right),$
    $E^+ \; = \;  \sigma \left(\frac{C - \rho}{k-1}\right)$
    whenever $\mu \leq 0.$

    The ``ladder'' and the optimal acceptance threshold curve for symmetrized Pareto distributions are fundamentally different from
    the corresponding graphs for the normal and continuous uniform distributions. Namely, $\alpha_0(\rho)$ increases in some neighborhood of $\rho = 0$.

    As a result, $\alpha_0(\rho)$ has two extremes.
    This is caused by the following peculiarities of the symmetrized Pareto distribution: an increase of $\rho$ from negative to positive values decreases
    $E^+$ and increases $E^-$. By virtue of \eqref{optimal_threshold}, this causes an increase of $\alpha_0.$

    This means that the plausible hypothesis about the profitability of the voting threshold raising when
    the environment becomes less favorable (while the type of distribution and $\sigma$ are preserved) is not generally true.
    In contrast, for symmetrized Pareto distributions, it is advantageous to lower the threshold whenever a decreasing $\rho$ remains close to zero (an abnormal part of the graph).

    Figures \ref{fig:pareto} and \ref{fig:pareto_even} illustrate the dependence of the center of the half-interval
    of optimal voting thresholds versus $\rho=\mu/\sigma$ for symmetrized Pareto distributions with $k=8.$

    \begin{proof} {$of$ $Corollary$ {\it \ref{corollary:4} } }
    Let $F(\cdot)$ and $f(\cdot)\,$ be the cumulative Pareto distribution function and the Pareto density, respectively;
    $\,\rho=\mu/\sigma,$ and $C \sigma = \sigma \sqrt{\frac{(k-1)(k-2)}{2}} = a. $

    Let $\mu > 0$. Then

    \begin{align*}
            E^- &= -\frac{1}{F(0)} \int_{-\infty}^{0} x \frac{k}{2 C \sigma}
            \left( \frac{-x + C \sigma + \rho \sigma }{C \sigma} \right)^{-(k+1)} dx \nonumber \\
            &= -\frac{1}{\frac{1}{2} \left(\frac{C}{C + \rho}\right)^k} \frac{k}{2 C \sigma}
            \left( \frac{(C \sigma)^{k+1} (-x + C \sigma + \rho \sigma)^{-k}
            (kx - C \sigma - \rho \sigma) }{(k-1)k} \right) \bigg|_{-\infty}^{0} \nonumber \\
            &= \sigma \left(\frac{C + \rho}{k-1}\right); \nonumber
    \end{align*}

    \begin{align*}
            E^+ &= \frac{1}{1 - F(0)} \int_{0}^{\mu} x \frac{k}{2 C \sigma}
            \left( \frac{-x + C \sigma + \rho \sigma }{C \sigma} \right)^{-(k+1)} dx \nonumber \\
            &+ \; \; \frac{1}{1 - F(0)} \int_{\mu}^{\infty} x \frac{k}{2 C \sigma}
            \left( \frac{x + C \sigma - \rho \sigma }{C \sigma} \right)^{-(k+1)} dx \nonumber \\
            &= \frac{1}{1 - \frac{1}{2} \left(\frac{C}{C + \rho}\right)^k} \frac{k}{2 C \sigma}
            \left( \frac{(C \sigma)^{k+1} (-x + C \sigma + \rho \sigma)^{-k}
            (kx - C \sigma - \rho \sigma) }{(k-1)k} \right) \bigg|_{0}^{\rho \sigma}  \nonumber \\
            &- \frac{1}{1 - \frac{1}{2} \left(\frac{C}{C + \rho}\right)^k} \frac{k}{2 C \sigma}
            \left( \frac{(C \sigma)^{k+1} (x + C \sigma - \rho \sigma)^{-k}
            (kx + C \sigma - \rho \sigma) }{(k-1)k} \right) \bigg|_{\rho \sigma}^{\infty} \nonumber \\
            &= \frac{\sigma}{1 - \frac{1}{2}\left( \frac{C}{C + \rho} \right)^k}
            \left(\rho + \left( \frac{C}{C + \rho} \right)^k
            \frac{C + \rho}{2(k-1)}\right). \nonumber
    \end{align*}

    Similarly,
    $E^- = -\frac{\sigma}{1 - \frac{1}{2}\left( \frac{C}{C - \rho} \right)^k}
        \left(\rho - \left( \frac{C}{C - \rho} \right)^k
        \frac{C - \rho}{2(k-1)}\right)$ and
    $E^+ \; = \;  \sigma \left(\frac{C - \rho}{k-1}\right)$ whenever $\mu \leq 0$.
    \qed




    \end{proof}

\subsection{Proposals generated by Laplace distributions}
\label{sec:laplace}

    \begin{figure}
      \includegraphics{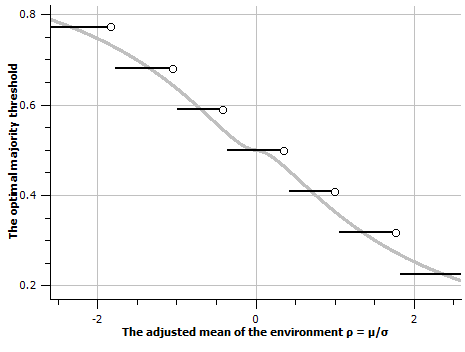}
    \caption{The center $\bar a_0$ of the half-interval of optimal majority/acceptance thresholds $($a ``ladder''$)$ for $n=11$ and the optimal threshold \eqref{laplace} as functions of $\rho$ for Laplace distributions.} 
    \label{fig:laplace}       
    \end{figure}

    The density of the Laplace distribution with parameters $\mu$ (location parameter)  and $\lambda > 0$ (rate parameter) is
    \begin{equation}\label{e_diff}
            f(x) = \frac{\lambda}{2} \exp{\left(-\lambda|x-\mu|\right)}. \nonumber
    \end{equation}

    For Laplace distributions, the following corollary holds.
    \begin{corollary}
        \label{corollary:5}
            The optimal majority/acceptance threshold in the case of proposals generated by the Laplace distribution with
            parameters $\mu$ and $\lambda$ is
                \begin{eqnarray}\label{laplace}
                    \alpha_0=\frac{1}{2}\left(1+{\rm sign}(\rho)\frac{1-\sqrt{2}|\rho|-\exp{\left(-\sqrt{2}|\rho|\right)}}{1+\sqrt{2}|\rho|}\right).
                \end{eqnarray}
    \end{corollary}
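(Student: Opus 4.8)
The plan is to derive \eqref{laplace} from Theorem~\ref{the:1} in exactly the manner of Corollaries~\ref{corollary:1}, \ref{corollary:3}, and \ref{corollary:4}: compute the conditional means $E^+={\rm E}(\zeta\mid\zeta>0)$ and $E^-=\big|{\rm E}(\zeta\mid\zeta\le 0)\big|$ for the Laplace law and substitute them into $\alpha_0=E^-/(E^++E^-)$. Since the variance of the Laplace distribution with rate $\lambda$ is $2/\lambda^2$, one has $\sigma=\sqrt2/\lambda$, hence $\lambda\mu=\sqrt2\,\rho$; this is the substitution that turns $\lambda$ and $\mu$ into $\rho$ at the end.

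First I would treat the case $\mu>0$. Because $0<\mu$, on the event $\{\zeta\le 0\}$ the density is the pure left exponential $\tfrac\lambda2 e^{-\lambda(\mu-x)}$, so $q=F(0)=\int_{-\infty}^0\tfrac\lambda2 e^{-\lambda(\mu-x)}\,dx=\tfrac12 e^{-\lambda\mu}$ and $p=1-\tfrac12 e^{-\lambda\mu}$. Evaluating $\int_{-\infty}^{0}x\,\tfrac\lambda2 e^{-\lambda(\mu-x)}\,dx$ by parts and dividing by $F(0)$ gives the pleasantly simple value $E^-=1/\lambda$. For $E^+$ I would avoid a second integral by using the total-expectation identity $\mu=pE^+-qE^-$ (the same identity invoked in the proof of Corollary~\ref{corollary:zeroAlpha}); solving for $E^+$ yields $E^+=(\mu+qE^-)/p=\dfrac1\lambda\cdot\dfrac{2\lambda\mu+e^{-\lambda\mu}}{2-e^{-\lambda\mu}}$.

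Writing $t=\lambda\mu=\sqrt2\,\rho$, the cleanest route to the stated symmetric form is through $2\alpha_0-1=\dfrac{E^--E^+}{E^++E^-}$. Both $E^--E^+$ and $E^++E^-$ carry the common factor $\tfrac1\lambda(2-e^{-t})^{-1}$, which cancels, leaving
\begin{equation}
2\alpha_0-1=\frac{1-t-e^{-t}}{1+t}=\frac{1-\sqrt2\,\rho-e^{-\sqrt2\,\rho}}{1+\sqrt2\,\rho},\nonumber
\end{equation}
which is precisely \eqref{laplace} for $\rho>0$, where ${\rm sign}(\rho)=1$ and $|\rho|=\rho$.

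Finally, the case $\mu<0$ follows by symmetry rather than by redoing the integrals: the reflection $x\mapsto -x$ carries the Laplace law with mean $\mu$ to the one with mean $-\mu$ and interchanges the events $\{\zeta>0\}$ and $\{\zeta\le 0\}$, so $E^+$ and $E^-$ simply swap. This flips the sign of the numerator $E^--E^+$ and thereby reproduces the factor ${\rm sign}(\rho)$ together with $|\rho|$; at $\rho=0$ one has $E^+=E^-$ and $\alpha_0=\tfrac12$, consistent with both branches. I expect the only real friction to be the bookkeeping in cancelling the $2-e^{-t}$ factors; the identification $E^-=1/\lambda$ and the use of $\mu=pE^+-qE^-$ are what keep that step short and make the ${\rm sign}(\rho)$/$|\rho|$ structure transparent.
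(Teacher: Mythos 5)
Your proof is correct and takes essentially the same route as the paper: compute the Laplace conditional means, substitute into Theorem~\ref{the:1}, and convert parameters via $\sigma=\sqrt{2}/\lambda$, i.e., $\lambda\mu=\sqrt{2}\rho$; your values $E^-=1/\lambda$ and $E^+=\bigl(2\mu+e^{-\lambda\mu}/\lambda\bigr)/\bigl(2-e^{-\lambda\mu}\bigr)$ for $\mu>0$ coincide exactly with those stated in the paper. Your two shortcuts---obtaining $E^+$ from the identity $\mu=pE^+-qE^-$ (already used in the paper's proof of Corollary~\ref{corollary:zeroAlpha}) instead of a second integration, and dispatching $\mu\le 0$ by the reflection $x\mapsto-x$ rather than redoing the integrals as the paper does ``similarly'' to Corollary~\ref{corollary:4}---are harmless streamlinings, not a different method.
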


    Corollary \ref{corollary:5} follows from Theorem \ref{the:1} and the facts (their proof is similar to the proof of Corollary \ref{corollary:4}) that:

    $E^- = \frac{1}{\lambda},$
    $E^+ = \frac{2\mu + \frac{e^{-\lambda\mu}}{\lambda}}{2-e^{-\lambda\mu}}$
    whenever $\mu > 0;$

    $E^- = -\frac{2\mu - \frac{e^{\lambda\mu}}{\lambda}}{2-e^{\lambda\mu}},$
    $E^+ = \frac{1}{\lambda}$
    whenever $\mu \leq 0.$

    In Lemma 3 of \citeauthor{cheb2018} (\citeyear{cheb2018}), it was proved that the symmetrized Pareto distribution with parameters
    $k$, $\mu$, and $\sigma$ tends, as $k \to \infty$, to the Laplace distribution with the same mean and standard deviation.

    Notice that the abnormal part of the curve $\alpha_0(\rho)$ for symmetrized Pareto distributions becomes smaller with the growth of $k$.
    Figure 6 shows that it vanishes in the case of Laplace distribution. Now we verify this using the first derivative of $\alpha_0$ with respect to $\rho$. It is

        \begin{eqnarray}\label{derivative}
                \frac{d\alpha_0}{d\rho} =
                \frac{ e^{-\sqrt{2}|\rho|} (\sqrt{2} + |\rho|) - \sqrt{2}}{(1+\sqrt{2}|\rho|)^2}.
        \end{eqnarray}

    Note that this derivative is non-positive and is equal to zero only when $\rho = 0.$         The increasing part of the curve $\alpha_0(\rho)$ for the symmetrized Pareto distribution  degenerates to a single point as this distribution converges to the Laplace distribution. It is the point, where the first derivative \eqref{derivative} of $\alpha_0$ is equal to zero.

    \begin{figure}
      \includegraphics{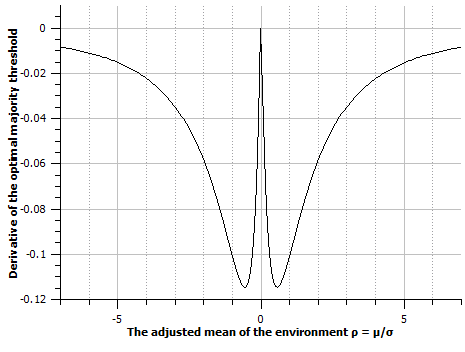}
    \caption{The first derivative of $\alpha_0$ with respect to $\rho$ \eqref{derivative} as a function of $\rho$ for Laplace distributions.}
    \label{fig:laplace_derivative}       
    \end{figure}

    Figures \ref{fig:laplace} and \ref{fig:laplace_derivative} show the dependence of the center of the half-interval
    of optimal voting thresholds versus $\rho=\mu/\sigma$ and the dependence of the first derivative of $\alpha_0$ with respect to $\rho$ versus $\rho$ for Laplace distributions,
    respectively.

    We summarize the results of the above corollaries in Tables \ref{tab:prob_summary} and \ref{tab:win_loss_summary}.

    \begin{table}[!h]
    \caption{\label{tab:prob_summary}Probabilities of positive and negative proposals for several distributions.}
        \begin{center}
            \begin{tabular}{|l|c|c|c|c|c|}
                \hline
                {\centering              Distribution \par} & Parameters & $p$ & $q$ \\
                \hline
                Continuous uniform distribution & $-a < 0,b > 0$ & $\frac{b}{a+b}$ & $\frac{a}{a+b}$ \\
                Normal distribution & $\mu, \sigma$ & $F(\rho)$ & $F(-\rho)$  \\
                Symmetrized Pareto distribution $(\mu>0)$ & $k>2, \mu, \sigma$ & $1 - \frac{1}{2}\left( \frac{C}{C + \rho} \right)^k $
                & $\frac{1}{2}\left( \frac{C}{C + \rho} \right)^k$ \\
                Symmetrized Pareto distribution $(\mu \leq 0)$ & $k>2, \mu, \sigma$ & $\frac{1}{2}\left( \frac{C}{C - \rho} \right)^k$
                & $1 - \frac{1}{2}\left( \frac{C}{C - \rho} \right)^k $\\
                Laplace distribution $(\mu>0)$ & $\mu, \lambda$ & $1-\frac{1}{2}e^{-\lambda\mu}$ & $\frac{1}{2}e^{-\lambda\mu}$ \\
                Laplace distribution $(\mu \leq 0)$ & $\mu, \lambda$ & $\frac{1}{2}e^{\lambda\mu}$ & $1-\frac{1}{2}e^{\lambda\mu}$ \\
                \hline
            \end{tabular}
        \end{center}
        where $C = \sqrt{\frac{(k-1)(k-2)}{2}}$, $\,\rho = \mu/\sigma,$
            while $\,F(\cdot)$ is the standard normal cumulative distribution function.
    \end{table}

    \begin{table}[!h]
    \caption{\label{tab:win_loss_summary}Expected win and loss for several distributions.}
        \begin{center}
            \begin{tabular}{|l|c|c|c|c|c|}
                \hline
                Distribution & Parameters & $E^+$ & $E^-$ \\
                \hline
                Continuous uniform distribution & $-a < 0,b > 0$ & $\frac{b}{2}$ & $\frac{a}{2}$  \\
                Normal distribution & $\mu, \sigma$
                & $\mu + \sigma \frac{f(\rho)}{F(\rho)}$ &
                 $-\mu + \sigma \frac{f(\rho)}{F(-\rho)}$  \\
                Symmetrized Pareto distribution $(\mu>0)$ & $k>2, \mu, \sigma$
                & $\frac{\sigma}{p}
                    \left(\rho + q
                    \frac{C + \rho}{k-1}\right)$ & $\sigma \left(\frac{C + \rho}{k-1}\right)$\\
                Symmetrized Pareto distribution $(\mu \leq 0)$ & $k>2, \mu, \sigma$ & $\sigma \left(\frac{C - \rho}{k-1}\right)$
                & $-\frac{\sigma}{q}
                    \left(\rho - p
                    \frac{C - \rho}{k-1}\right)$\\
                Laplace distribution $(\mu>0)$ & $\mu, \lambda$
                 & $\frac{1}{p}\left(\mu + \frac{e^{-\lambda\mu}}{2\lambda}\right)$ & $\frac{1}{\lambda}$ \\
                Laplace distribution $(\mu \leq 0)$ & $\mu, \lambda$
                 & $\frac{1}{\lambda}$ & $-\frac{1}{q}\left(\mu - \frac{e^{\lambda\mu}}{2\lambda}\right)$ \\
                \hline
            \end{tabular}
        \end{center}
        where $C = \sqrt{\frac{(k-1)(k-2)}{2}}$, $\,\rho = \mu/\sigma,$
            while $\,F(\cdot)$ and $f(\cdot)\,$ are the standard normal cumulative distribution function and density, respectively;
            $p$ and $q$ are presented in the corresponding rows of Table \ref{tab:prob_summary}.
    \end{table}

\section{Expected utility increment}
\label{sec:expUtInc}

    Let $Y_n \sim$ Bin$(n,p)$, where Bin$(n,p)$ is the binomial distribution with parameters $n$ and $p.$ Let
    $F_n(k) = \sum\limits_{x=0}^{k} \begin{pmatrix} n \\ x \end{pmatrix} p^x q^{n-x}$ be the cumulative distribution function of $Y_n$.
    Let $G_n(k) = 1 - F_n(k) = \sum\limits_{x=k+1}^{n} \begin{pmatrix} n \\ x \end{pmatrix} p^x q^{n-x}$. According to
    a relationship between the binomial distribution and the Beta distribution we have
        \begin{equation}\label{eq_beta}
                G_n(k) = B(p \; | \; k+1, n - k),
        \end{equation}
    where $B( \cdot \; | \; k + 1, n - k)$ is the cumulative distribution function of Beta distribution with $k+1$ and $n-k$ degrees of freedom.
    We also put $G_n(m) = 1$, when $m < 0$.

    Now we can prove the following theorem.

    \begin{theorem}
        \label{the:2}
        Let $\bm\eta=(\eta_1,\ldots,\eta_n)$ be a voting sample from some distribution with an absolute voting threshold\, $n_0 \in \{1,...,n-1\}$.
        Then for any $k=1,...,n$ it holds that
            \begin{eqnarray}\label{voting_sample_final}
                {\rm E}(\eta_k) = p (E^+ + E^-) B(p \; | \; n_0, n - n_0) - E^- B(p \; | \; n_0 + 1, n - n_0),
            \end{eqnarray}
        where $E^- = \big|{\rm E}(\zeta \; | \; \zeta \leq 0)\big|, E^+ = {\rm E}(\zeta \; | \; \zeta > 0),$
        $B(\cdot \; | \; m, l)$ is the cumulative distribution function of Beta distribution with $m$ and $l$ degrees of freedom,
        $p = P\{\zeta > 0\} = 1 - F(0),$
        $\zeta$ is the random variable that determines the utility increment of any agent in a random proposal, and $F(\cdot)$ is the cumulative distribution function of $\zeta.$
    \end{theorem}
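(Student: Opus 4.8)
The plan is to start from the finite sum \eqref{voting_sample} established in Theorem~\ref{lemm:1} and to convert it into the two Beta-distribution terms of \eqref{voting_sample_final} by recognizing each piece as a tail probability of a binomial law. First I would split the summand into its two additive parts, writing
\[
    {\rm E}(\eta_k) = (E^+ + E^-)\sum_{x=n_0+1}^{n}\frac{x}{n}\binom{n}{x}p^x q^{n-x}
        - E^-\sum_{x=n_0+1}^{n}\binom{n}{x}p^x q^{n-x}.
\]
The second sum is exactly $G_n(n_0)$, so by the binomial--Beta relation \eqref{eq_beta} it equals $B(p \mid n_0+1,\, n-n_0)$, which already reproduces the second term of \eqref{voting_sample_final}.

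The core of the argument is the first sum. Here I would invoke the elementary identity $\tfrac{x}{n}\binom{n}{x} = \binom{n-1}{x-1}$ to lower the upper index, then reindex with $y = x-1$ and factor out one power of $p$:
\[
    \sum_{x=n_0+1}^{n}\frac{x}{n}\binom{n}{x}p^x q^{n-x}
        = p\sum_{y=n_0}^{n-1}\binom{n-1}{y}p^{y}q^{(n-1)-y}.
\]
The remaining sum is the upper tail $G_{n-1}(n_0-1)$ of a $\mathrm{Bin}(n-1,p)$ variable. Applying \eqref{eq_beta} with $n$ replaced by $n-1$ and $k = n_0-1$ gives $G_{n-1}(n_0-1) = B(p \mid n_0,\, n-n_0)$, so the first sum equals $p\,B(p \mid n_0,\, n-n_0)$. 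Multiplying by $(E^+ + E^-)$ recovers the first term, and combining the two evaluations yields \eqref{voting_sample_final} directly.

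I do not expect a genuine obstacle in this proof; the only point demanding care is the bookkeeping of the Beta parameters after the index shift. One must check that replacing $(n,k)$ by $(n-1,\, n_0-1)$ in \eqref{eq_beta} indeed produces the degrees of freedom $n_0$ and $(n-1)-(n_0-1) = n-n_0$, and that the hypothesis $n_0 \in \{1,\ldots,n-1\}$ keeps both Beta parameters strictly positive, so that every invocation of \eqref{eq_beta} is legitimate and no degenerate tail (handled by the convention $G_n(m)=1$ for $m<0$) is accidentally triggered.
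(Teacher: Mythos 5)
Your proof is correct and is essentially the paper's own argument: the paper establishes the same reindexing step as a standalone identity \eqref{exp}, namely $\sum_{x=k}^{n} x \binom{n}{x} p^x q^{n-x} = np\, G_{n-1}(k-2)$, which after dividing by $n$ and setting $k=n_0+1$ is exactly your computation via $\tfrac{x}{n}\binom{n}{x}=\binom{n-1}{x-1}$, and then applies \eqref{eq_beta} to both tail sums just as you do. Your parameter bookkeeping ($G_{n-1}(n_0-1)=B(p \mid n_0,\, n-n_0)$ and $G_n(n_0)=B(p \mid n_0+1,\, n-n_0)$, both legitimate since $n_0\in\{1,\ldots,n-1\}$) matches the paper's conclusion.
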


    Corollary \ref{corollary:zeroAlpha}  in Subsection \ref{sec:voting_sample} extends the result of Theorem \ref{the:2} to $n_0=0$. It is easy to prove that ${\rm E}(\eta_k) = \mu$
    for $n_0=-1$ and ${\rm E}(\eta_k) = 0$ for $n_0=n.$

    \begin{proof}
       Let us prove the following equation:
            \begin{eqnarray}\label{exp}
                \sum\limits_{x=k}^{n} x \begin{pmatrix} n \\ x \end{pmatrix} p^x q^{n-x} = np \cdot G_{n-1}(k-2).
            \end{eqnarray}

       Denoting $y=x-1$ we have

        \begin{align*}\ 
            \sum\limits_{x=k}^{n} x \begin{pmatrix} n \\ x \end{pmatrix} p^x q^{n-x}  \nonumber
            &= np \sum\limits_{x=k}^{n} x \frac{(n-1)!}{(n-x)!x!} p^{x-1} q^{n-x}  \\ \nonumber
            &= np \sum\limits_{x=k}^{n} \frac{(n-1)!}{((n-1)-(x-1))!(x-1)!} p^{x-1} q^{(n-1)-(x-1)}  \\ \nonumber
            &= np \sum\limits_{x=k}^{n} \begin{pmatrix} n-1 \\ x-1 \end{pmatrix} p^{x-1} q^{(n-1)-(x-1)} \\ \nonumber
            &= np \sum\limits_{y=k-1}^{n-1} \begin{pmatrix} n-1 \\ y \end{pmatrix} p^{y} q^{(n-1)-y} \\ \nonumber
            &= np \cdot G_{n-1}(k-2). \nonumber
        \end{align*}

       Now we get \eqref{voting_sample_final} applying the definition of $G_n(k)$, \eqref{eq_beta}, and \eqref{exp} to \eqref{voting_sample}.
    \qed
    \end{proof}

        The formula \eqref{voting_sample_final} can be rewritten in terms of the regularized incomplete beta function.
        \footnote{Corollary \ref{corollary:incomplete_beta} has been suggested by an anonymous  referee.}

    \begin{corollary}
        \label{corollary:incomplete_beta}
        Let $\bm\eta=(\eta_1,\ldots,\eta_n)$ be a voting sample from some distribution with an absolute voting threshold\, $n_0 \in \{1,...,n-1\}$.
        Then for any $k=1,...,n$ it holds that
            \begin{eqnarray}\label{incomplete_beta_voting_sample}
                {\rm E}(\eta_k) = \mu I_p(n_0, n - n_0) + \frac{E^- p^{n_0} (1-p)^{n-n_0}}{n_0 B(n_0,n-n_0)},
            \end{eqnarray}
        where $I_p(n_0, n - n_0)$ and $B(n_0,n-n_0)$ are the regularized incomplete beta function and beta function, respectively,
        $\mu = {\rm E}(\zeta),$
        and the other notations are defined in Theorem \ref{the:2}.
    \end{corollary}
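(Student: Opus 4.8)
The plan is to transform the closed form \eqref{voting_sample_final} of Theorem \ref{the:2} into \eqref{incomplete_beta_voting_sample} using two ingredients: the identification of the Beta cumulative distribution function with the regularized incomplete beta function, $B(p\mid m,l)=I_p(m,l)$, and the relation $\mu=pE^+-(1-p)E^-$ already recorded in the proof of Corollary \ref{corollary:zeroAlpha}. First I would rewrite \eqref{voting_sample_final} as
\[
    {\rm E}(\eta_k)=p(E^++E^-)\,I_p(n_0,n-n_0)-E^-\,I_p(n_0+1,n-n_0).
\]

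Next I would compare this with the target \eqref{incomplete_beta_voting_sample}. Substituting $\mu=pE^+-(1-p)E^-$ into $\mu\,I_p(n_0,n-n_0)$ and subtracting the result from the rewritten \eqref{voting_sample_final}, every term carrying $E^+$ cancels, and the coefficient of $E^-$ collapses by $p+(1-p)=1$, leaving $E^-\bigl(I_p(n_0,n-n_0)-I_p(n_0+1,n-n_0)\bigr)$. Hence the corollary reduces to the single contiguity identity
\[
    I_p(n_0,n-n_0)-I_p(n_0+1,n-n_0)=\frac{p^{n_0}(1-p)^{n-n_0}}{n_0\,B(n_0,n-n_0)}.
\]

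The remaining---and principal---step is to establish this recurrence for the regularized incomplete beta function. I would do so by integration by parts: starting from $B(n_0+1,n-n_0)\,I_p(n_0+1,n-n_0)=\int_0^p t^{n_0}(1-t)^{n-n_0-1}\,dt$ and differentiating the factor $t^{n_0}$ while integrating $(1-t)^{n-n_0-1}$ produces the boundary term $-p^{n_0}(1-p)^{n-n_0}/(n-n_0)$ (the contribution at $t=0$ vanishes because $n_0\ge 1$) together with an integral of $t^{n_0-1}(1-t)^{n-n_0}$. Splitting the latter through $(1-t)^{n-n_0}=(1-t)^{n-n_0-1}(1-t)$ expresses it as $B(n_0,n-n_0)I_p(n_0,n-n_0)-B(n_0+1,n-n_0)I_p(n_0+1,n-n_0)$, yielding a linear equation for $I_p(n_0+1,n-n_0)$. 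Solving it and using $B(n_0+1,n-n_0)=\tfrac{n_0}{n}B(n_0,n-n_0)$ together with $n_0+(n-n_0)=n$ gives the displayed identity, which completes the argument. The bookkeeping in this integration by parts is the only delicate point; the reduction in the first two paragraphs is routine algebra, so I expect the Beta-function recurrence to be the crux of the proof.
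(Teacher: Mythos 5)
Your proof is correct and takes essentially the same route as the paper's: the paper's proof consists exactly of your first two paragraphs' reduction --- rewriting \eqref{voting_sample_final} via $B(p \mid m,l)=I_p(m,l)$, applying the contiguity identity $I_p(n_0+1,n-n_0)=I_p(n_0,n-n_0)-\frac{p^{n_0}(1-p)^{n-n_0}}{n_0 B(n_0,n-n_0)}$ (which it simply cites as a known property of the Beta distribution and incomplete beta function), and collapsing $pE^+-(1-p)E^-=\mu$. Your only addition is the integration-by-parts derivation of that contiguity identity, which is correct and merely makes self-contained a step the paper takes as standard.
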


    \begin{proof}
        Using the properties of the Beta distribution and incomplete beta function we get

        \begin{align*} 
            {\rm E}(\eta_k) &= p (E^+ + E^-) B(p \; | \; n_0, n - n_0) - E^- B(p \; | \; n_0 + 1, n - n_0) \\  \nonumber
            &= p (E^+ + E^-) I_p(n_0, n - n_0) - E^- \left(I_p(n_0, n - n_0) - \frac{p^{n_0} (1-p)^{n-n_0}}{n_0 B(n_0, n - n_0)} \right) \\  \nonumber
            &= \left( p E^+ - (1-p) E^- \right) I_p(n_0, n - n_0) + \frac{E^- p^{n_0} (1-p)^{n-n_0}}{n_0 B(n_0,n-n_0)} \\  \nonumber
            &= \mu I_p(n_0, n - n_0) + \frac{E^- p^{n_0} (1-p)^{n-n_0}}{n_0 B(n_0,n-n_0)}. \nonumber
        \end{align*}  \qed

    \end{proof}

    Theorem \ref{the:2} allows one to obtain a specific expression for ${\rm E}(\eta_k$) for each distribution generating proposals by applying specific forms of $E^+$, $E^-$, $p$, and $q$ (Tables \ref{tab:prob_summary} and \ref{tab:win_loss_summary}). For example, for continuous uniform distributions, the following corollary holds.

    \begin{corollary}
        \label{corollary:uniform}
            Let $\bm\eta=(\eta_1,\ldots,\eta_n)$ be a voting sample from the continuous uniform distribution on the segment $[-a, b]$ with $-a<0$ and $b>0$ with an absolute voting threshold $n_0 = \in \{1,...,n-1\}$. Then for any $k=1,...,n$ it holds that
                \begin{eqnarray}\label{cor_unif}
                    {\rm E}(\eta_k) = \frac{b}{2} B\left(\frac{b}{a+b} \; \middle\vert \; n_0, n - n_0\right) - \frac{a}{2} B\left(\frac{b}{a+b} \; \middle\vert \; n_0 + 1, n - n_0\right),
                \end{eqnarray}
            where $B(\cdot \; | \; m, l)$ is the cumulative distribution function of Beta distribution with $m$ and $l$ degrees of freedom.
    \end{corollary}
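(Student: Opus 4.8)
The plan is to specialize Theorem \ref{the:2} to the continuous uniform distribution, reading off the relevant quantities from Tables \ref{tab:prob_summary} and \ref{tab:win_loss_summary} and carrying out the single algebraic simplification that is needed. Since the corollary is an instance of the general formula \eqref{voting_sample_final}, no fresh probabilistic argument is required.

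First I would record the four ingredients that enter \eqref{voting_sample_final}. For the continuous uniform distribution on $[-a,b]$ with $-a<0$ and $b>0$, Table \ref{tab:prob_summary} gives $p=\tfrac{b}{a+b}$ and $q=\tfrac{a}{a+b}$, while Table \ref{tab:win_loss_summary} gives $E^+=\tfrac{b}{2}$ and $E^-=\tfrac{a}{2}$. Because $n_0\in\{1,\ldots,n-1\}$, the hypotheses of Theorem \ref{the:2} are met, so \eqref{voting_sample_final} applies verbatim to this sample.

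The only non-immediate step is to simplify the coefficient of the first beta term, where the common factor $a+b$ cancels:
\begin{equation*}
    p(E^+ + E^-) = \frac{b}{a+b}\left(\frac{b}{2}+\frac{a}{2}\right) = \frac{b}{a+b}\cdot\frac{a+b}{2} = \frac{b}{2}.
\end{equation*}
The coefficient of the second beta term is simply $E^-=\tfrac{a}{2}$, and both beta cumulative distribution functions are evaluated at $p=\tfrac{b}{a+b}$. Substituting these three values into \eqref{voting_sample_final} produces \eqref{cor_unif} at once.

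There is no genuine obstacle here: once the four quantities are taken from the tables, the result is a one-line substitution whose only subtlety is the cancellation displayed above. Should one prefer a self-contained derivation, the values of $p$ and $E^+$ follow from elementary integration of the density $\tfrac{1}{a+b}$ on $[-a,b]$, namely $p=P\{\zeta>0\}=\tfrac{b}{a+b}$ and $E^+={\rm E}(\zeta\mid\zeta>0)=\tfrac{b}{2}$, with the symmetric computations giving $q$ and $E^-$; feeding these into Theorem \ref{the:2} again yields \eqref{cor_unif}.
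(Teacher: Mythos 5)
Your proof is correct and follows exactly the route the paper intends: the paper gives no separate proof for this corollary, presenting it as a direct substitution of the uniform-distribution values $p=\tfrac{b}{a+b}$, $E^+=\tfrac{b}{2}$, $E^-=\tfrac{a}{2}$ from Tables \ref{tab:prob_summary} and \ref{tab:win_loss_summary} into \eqref{voting_sample_final}. Your explicit verification of the cancellation $p(E^++E^-)=\tfrac{b}{2}$ and the optional self-contained derivation of the table entries make the argument, if anything, slightly more complete than the paper's own treatment.
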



\section{Comparison of the expected utility increments}

In \citeauthor{cheb2018} (\citeyear{cheb2018}), the issue of correct location-and-scale standardization of distributions for the analysis of the ViSE model has been discussed.
An alternative (compared to using the same mean and variance) approach to standardizing continuous symmetric distributions was proposed. Namely, distributions similar in position and scale must have the same $\mu$ and the same interval (centered at $\mu$) containing a certain
essential proportion of probability. Such a standardization provides more similarity in the central region and the same weight of tails outside this region.

In what follows, we apply this approach for the comparison of the expected utility for several distributions. Namely, for each distribution, we find the variance such
that the first quartiles (and thus, all quartiles because the distributions are symmetric) coincide for zero mean distributions, where  the first quartile, $Q_1$, splits off the
``left'' 25\% of probability from the ``right'' 75\%.

For the normal distribution, $Q_1 \approx -0.6745 \sigma_N$, where $\sigma_N$ is
the standard deviation.

For the continuous uniform distribution, $Q_1 = -\frac{\sqrt{3}}{2} \sigma_U$, where $\sigma_U$ is its standard deviation.

For the symmetrized Pareto distribution, $Q_1 = C(1-2^\frac{1}{k})\sigma_P$, where $\sigma_P$ is the standard deviation and $C = \sqrt{\frac{(k-1)(k-2)}{2}}$.
This follows from the equation
        \begin{equation}\label{e_positive}
                F_P(Q_1) = \frac{1}{2}\left(\frac{C}{C-\frac{Q_1}{\sigma_P}}\right)^{k} = \frac{1}{4}, \nonumber
        \end{equation}
        where $F_P(\cdot)$ is the corresponding cumulative distribution function.

For the Laplace distribution, $Q_1 = \frac{-\ln{2}}{\lambda} = -\sigma_L\frac{\ln{2}}{\sqrt{2}}$, where $\sigma_L$ is the standard deviation.

Consequently, $\sigma_U \approx 0.7788\sigma_N$, $\sigma_P \approx 1.6262\sigma_N$ for $k = 8$, and $\sigma_L \approx 1.3762\sigma_N$.

    \begin{figure}
      \includegraphics{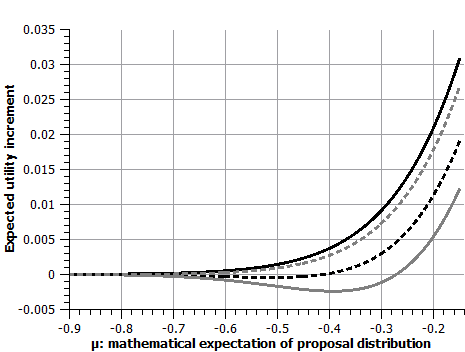}
    \caption{Expected utility increment of an agent as a function of $\mu$ with a majority/acceptance threshold of $\alpha = \frac{11}{21}$ for
      several distributions: black line denotes the symmetrized Pareto distribution, black dotted line the normal distribution (with $\sigma_N=1$), gray line the continuous
      uniform distribution, and gray dotted line the Laplace distribution.}
    \label{fig:expectedUtility}       
    \end{figure}

    \begin{figure}
      \includegraphics{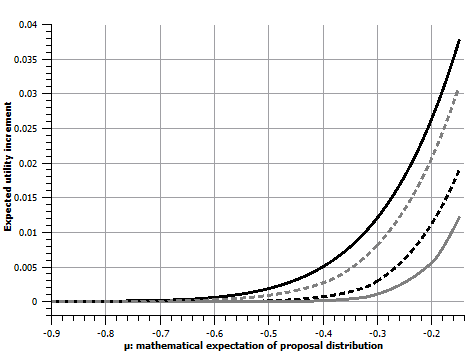}
    \caption{Expected utility increment of an agent as a function of $\mu$ when the optimal majority/acceptance thresholds are used for several distributions: black line denotes the symmetrized Pareto distribution, black dotted line the normal distribution (with $\sigma_N=1$), gray line the continuous
      uniform distribution, and gray dotted line the Laplace distribution.}
    \label{fig:optimalUtility}       
    \end{figure}

    Figures \ref{fig:expectedUtility} and \ref{fig:optimalUtility} show the dependence of the expected utility increment of an agent on the mean $\mu$ of the proposal
    distribution for several distributions (normal, continuous uniform, symmetrized Pareto, and Laplace) for the majority threshold $\alpha = \frac{11}{21}$ and the optimal acceptance threshold, respectively.
    They are obtained by substituting the parameters of the environments into \eqref{voting_sample_final}, \eqref{uniform1}, \eqref{normal}, \eqref{pareto}, and \eqref{laplace}.
    Obviously, the optimal acceptance threshold excludes ``pits of losses'' because the
    society has the option to take insuperable
    threshold of 1 and reject all proposals.

    \begin{figure}
      \includegraphics{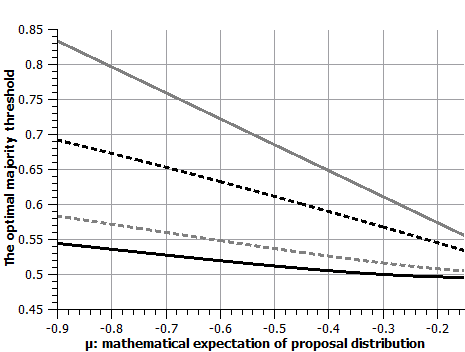}
    \caption{The optimal majority/acceptance threshold as function of $\mu$ for several distributions: black line denotes the symmetrized Pareto distribution, black dotted line the normal distribution, gray line the continuous
      uniform distribution, and gray dotted line the Laplace distribution.}
    \label{fig:optCompare}       
    \end{figure}

    Figure \ref{fig:optCompare} illustrates the dependence of the optimal majority threshold on $\mu$ for the same list of distributions. It helps to explain why for $\alpha = \frac{11}{21},$ the continuous uniform distribution has the deepest pit of losses
     (because of the biggest difference between the actual and optimal thresholds), and why the symmetrized Pareto and Laplace distributions have no discernible pit of losses (because those differences are the smallest).

\section{Conclusion}
\label{conclusion}
    In this paper, we obtained general expressions for the expected utility increase and the optimal voting threshold
(i.e., the threshold that maximizes social/individual welfare) as functions of the parameters of the stochastic
proposal generator in the assumptions of the ViSE model. These expressions were given more specific forms for several types of distributions.

Estimation of the optimal majority/acceptance threshold seems to be a solvable problem in real situations.
If the model is at least approximately adequate and one can estimate the type of distribution and $\rho=\mu/\sigma$ by means of experiments,
then it is possible to obtain an estimate for the optimal acceptance threshold using the formulas provided in this paper.

We found that for some distributions of proposals, the plausible hypothesis
that it is beneficial to increase the voting threshold when the environment becomes less favorable is not generally true.
A deeper study of this issue should be the subject of future research.


%
%


\bibliographystyle{spbasic}      
\bibliography{opt}   

\end{document}